\documentclass[12pt]{article}
\usepackage[utf8]{inputenc}
\usepackage[T1]{fontenc}
\usepackage{lmodern}
\usepackage{amsmath,amssymb,amsthm}
\usepackage{geometry}
\usepackage{indentfirst}
\usepackage{booktabs}
\usepackage{xcolor}
\usepackage{hyperref}
\newtheorem{theorem}{Theorem}
\newtheorem{lemma}{Lemma}
\newtheorem{proposition}{Proposition}
\newtheorem{corollary}{Corollary}
\theoremstyle{remark}

\theoremstyle{definition}

\title{Finiteness in Square Classes and Weighted Zero Density of Perfect Cuboids}
\author{Valery Asiryan\\[2mm]
\small\texttt{asiryanvalery@gmail.com}}
\date{\small September 29, 2026}

\begin{document}
\maketitle

\begin{abstract}
Let $M(N)$ be the number of similarity classes of positive rational perfect cuboids with ordered edges $a,b,c$, space diagonal $g$, and squarefree invariant $N=\operatorname{sf}(abcg/2)$. We prove that $M(N)$ is finite for every positive squarefree $N$, without any restriction on the rank of the associated congruent-number elliptic curve. More precisely, $M(N)\le C^{1+r_N}$ for an absolute constant $C>1$, where $r_N$ is the rank of $y^2=x^3-N^2x$. Using the established Paulsen--West rank obstruction and Smith's results on quadratic twists, we then prove
\[
 \lim_{X\to\infty}\frac1X
 \sum_{\substack{N\le X\\N\text{ positive squarefree}}}M(N)^q=0
 \qquad\text{for every fixed }q>0.
\]
Thus the result counts cuboid classes with their full multiplicities at each $N$. A quantitative decay estimate is also obtained. The geometric step excludes every positive-dimensional real abelian coset through the positive part of the compatibility surface; quantitative Mordell--Lang then gives the required bound. The argument is unconditional and uses the cuboid correspondence and rank obstruction as prior results.

\medskip
\noindent{\bf Keywords:} perfect cuboids; congruent numbers; elliptic curves; Mordell--Lang theorem; quadratic twists; zero density.

\smallskip
\noindent{\bf Mathematics Subject Classification (2020):} Primary 11G05; Secondary 11D25, 14G05.
\end{abstract}


\section{Introduction and main results}\label{sec:intro}

A positive rational perfect cuboid consists of seven positive rational numbers satisfying
\begin{equation}\label{eq:cuboid}
 \begin{gathered}
 a^2+b^2=d^2,\qquad a^2+c^2=e^2,\qquad b^2+c^2=f^2,\\
 a^2+b^2+c^2=g^2.
 \end{gathered}
\end{equation}
We keep the edges $a,b,c$ ordered and identify two cuboids when all lengths differ by the same positive rational factor. Such similarity classes are equivalently primitive integral perfect cuboids with ordered edges.

For $u\in\mathbb{Q}_{>0}$, let $\operatorname{sf}(u)$ be the unique positive squarefree integer representing its class in $\mathbb{Q}^\times/\mathbb{Q}^{\times2}$. Define
\begin{equation}\label{eq:invariant}
 N=\operatorname{sf}\left(\frac{abcg}{2}\right),
 \qquad E_N:\ y^2=x^3-N^2x,
 \qquad r_N=\operatorname{rank} E_N(\mathbb{Q}).
\end{equation}
The integer $N$ is a similarity invariant: scaling the lengths by $v\in\mathbb{Q}_{>0}$ multiplies $abcg/2$ by the square $v^4$.

Let $\mathcal{S}$ be the set of positive squarefree integers, and put
\begin{equation}\label{eq:count-definition}
 M(N)=\#\{\text{ordered similarity classes satisfying \eqref{eq:cuboid}
 with invariant }N\}.
\end{equation}
The count concerns similarity classes; without this identification, any one rational cuboid would give infinitely many scaled copies with the same $N$.

Paulsen and West~\cite[Proposition~3.1]{PW} express the cuboid condition by three rational points on a congruent-number curve. Their Theorem~4.2 gives the rank obstruction
\begin{equation}\label{eq:rank-obstruction}
 M(N)>0\quad\Longrightarrow\quad r_N\ge2.
\end{equation}
We use these results to control the number of cuboids at a fixed $N$ and then sum these numbers over the squarefree parameters.

\begin{theorem}[Finiteness at every fixed parameter]\label{thm:finiteness}
There is an absolute constant $C_0>1$ such that, for every $N\in\mathcal{S}$,
\begin{equation}\label{eq:rank-bound}
 M(N)\le C_0^{1+r_N}<\infty.
\end{equation}
There is no upper bound or other restriction on $r_N$ in this statement. One admissible choice is
\begin{equation}\label{eq:explicit-constant}
 C_0=108^{180\cdot4^{38}}.
\end{equation}
\end{theorem}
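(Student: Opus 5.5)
We plan to realize each cuboid class with invariant $N$ as a rational point on a fixed surface $V$ sitting inside a power of $E_N$, and then apply quantitative Mordell--Lang. By the Paulsen--West correspondence \cite[Proposition~3.1]{PW}, a cuboid class with invariant $N$ gives three rational points $P_1,P_2,P_3\in E_N(\mathbb{Q})$, and the class is recovered from them up to finitely many choices. These points satisfy a compatibility condition: their $x$-coordinates, or suitable functions of them, obey one algebraic relation encoding the fourth equation of \eqref{eq:cuboid}. The surface $\mathcal{Z}_N\subset E_N^3$ cut out by this relation is a twist of a single surface $\mathcal{Z}\subset E_1^3$ over $\mathbb{Q}(\sqrt N)$. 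This twist $(x,y)\mapsto(Nx,N^{3/2}y)$ is an isomorphism of abelian varieties, so it preserves the geometry: the degree of $\mathcal{Z}_N$ with respect to a fixed embedding, and the structure of its special subvarieties. Both are independent of $N$. The map from cuboid classes to points of $\mathcal{Z}_N(\mathbb{Q})\cap\Gamma$, where $\Gamma=E_N(\mathbb{Q})^3$ has rank $3r_N$, is at most boundedly-many-to-one. It therefore suffices to bound the number of such points coming from the positive real locus.

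Next we apply the quantitative Mordell--Lang theorem (R\'emond's bound, or the version of David--Philippon/Gao--Ge--K\"uhne). For a subvariety $X$ of an abelian variety $A$ of dimension $n$, with degree $\deg X$, and a finitely generated subgroup $\Gamma$ of rank $\rho$, the points of $X\cap\Gamma$ lying outside the special locus number at most $(c\deg X)^{c'(1+\rho)}$. Here the special locus is the union of positive-dimensional cosets contained in $X$. We take $n=3$, $\rho=3r_N$ and bounded degree, which yields a bound of the form $C_0^{1+r_N}$. The stated $C_0=108^{180\cdot 4^{38}}$ suggests plugging $\deg$ and $n$ into R\'emond's explicit exponent $(n+1)^{\ldots}$-type constants. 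We will track this at the end, but it is bookkeeping.

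The main obstacle is the geometric step: we must show that no positive-dimensional coset $x+B\subset\mathcal{Z}_N$ contains a rational point coming from a genuine positive cuboid. Without this, the special locus could carry infinitely many points. Our strategy is to classify abelian subvarieties of $E_1^3$. Since $E_1$ has CM by $\mathbb{Z}[i]$, these are the kernels or images of matrices over $\mathbb{Z}[i]$, and we only need elliptic curves and abelian surfaces in $E^3$. For each type we restrict to the real locus, where a coset through a real point gives a one-parameter real family. We then check that on the positive part of the compatibility surface, the defining relation cannot vanish identically along such a family.

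Concretely, we would parametrize $x+B$ by $t\mapsto x+\phi(t)$ with $\phi$ given by $\mathbb{Z}[i]$-linear combinations of a point $t\in E$. Substituting into the relation, we look at its behaviour near a torsion point or at the identity, expanding in the formal group parameter. The leading terms should force degenerate edges ($a$, $b$ or $c$ zero, or a sign change), which contradicts positivity. We expect to try the formal-group expansion first, because it reduces the problem to finitely many linear-algebra cases over $\mathbb{Z}[i]$. Once the special locus meets the positive real region trivially, every relevant rational point is non-special, and the bound above proves \eqref{eq:rank-bound}.
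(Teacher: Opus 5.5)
There is a genuine gap in the geometric step, and it comes from the form of Mordell--Lang you chose. You use the ``points outside the special locus'' version. That forces you to prove the strong claim that no positive-dimensional coset $p+B\subseteq\mathcal Z_N$ passes through a positive rational point.

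Your method for this is to restrict to the real locus, where the coset becomes a one-parameter real family. That only works when $B$ is defined over $\mathbb{R}$. Since $E_N$ has CM by $\mathbb{Z}[i]$, abelian subvarieties of $E_N^3$ need not be real. For such $B$ the real points of $p+B$ can be finite, so there is no family to expand.

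Non-real cosets genuinely occur in the compatibility surface. One checks $F_N(x,-x,N)=0$, so $\{(P,[i]P,T_+):P\in E_N\}\subseteq S_N$, where $[i](x,y)=(-x,iy)$ and $T_+=(N,0)$. Its real points are finite. Nothing in your plan excludes a non-real coset through a positive rational point, and the paper neither proves nor needs that stronger claim. Even in the real case, your argument is only an expectation (``leading terms should force degenerate edges''), not a proof.

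The paper avoids both problems in three steps.

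\textbf{Covering form of Mordell--Lang.} It uses the David--Nakamaye--Philippon theorem: $S_N\cap E_N(\mathbb{Q})^3$ is covered by at most $108^{60(3r_N+1)4^{38}}$ cosets contained in $S_N$. Here $\deg_{\mathcal L}S_N=108$ because $[S_N]=2\Theta$. It then shows that each such coset, real or not, contains at most one point of $S_N^+(\mathbb{Q})$.

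\textbf{Reduction to real subgroups.} Suppose $p\in S_N^+(\mathbb{Q})$ lies on $p+B\subseteq S_N$. Let $H$ be the Zariski closure of $B\cap E_N(\mathbb{Q})^3$; it is defined over $\mathbb{Q}$.
\begin{itemize}
\item If $H$ is infinite, $H^0$ is a positive-dimensional real abelian subvariety with $p+H^0\subseteq S_N$. So only the real case ever needs excluding.
\item If $H$ is finite, two positive rational points on the coset differ by rational $2$-torsion. The explicit formulas for $P+(0,0)$ and $P+(\pm N,0)$ show that no nonzero torsion translate stays in the region $x>N$, $y>0$.
\end{itemize}

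\textbf{The real exclusion is topological.} On $E_N(\mathbb{R})^0$ put $\theta=2\arctan\frac{x-N}{x+N}\in[0,\pi/2]$; on $S_N$ one has $\theta_1+\theta_2+\theta_3=\pi$. A circle subgroup of $B(\mathbb{R})^0$ projects to each factor either trivially or as a covering of degree $n_i$.
\begin{itemize}
\item If one coordinate is constant and another covers, then at $T_+$ the third angle would be $\pi-c>\pi/2$, which is impossible.
\item If all three project nontrivially, the fibres over $T_+$ and $O$ give $n_i\ge n_j+n_k$ for every $i$, which is also impossible.
\end{itemize}
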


\begin{theorem}[Weighted zero density]\label{thm:density}
For every fixed real number $q>0$,
\begin{equation}\label{eq:limit}
\displaystyle
 \lim_{X\to\infty}\frac1X\sum_{\substack{N\in\mathcal{S}\\N\le X}}M(N)^q=0.
\end{equation}
More precisely, there is an absolute constant $\gamma>0$ such that for each fixed $q>0$ there are positive constants $K_q,D_q,X_q$ with
\begin{equation}\label{eq:quantitative}
 \sum_{\substack{N\in\mathcal{S}\\N\le X}}M(N)^q
 \le K_qX\exp\bigl(-\gamma L(X)^{1/2}+D_qL(X)^{1/4}\bigr)
 \quad(X\ge X_q),
\end{equation}
where $L(X)=\log\log\log X$.
\end{theorem}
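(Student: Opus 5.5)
We deduce Theorem~\ref{thm:density} from Theorem~\ref{thm:finiteness}, the rank obstruction \eqref{eq:rank-obstruction}, and Smith's distribution results for $2^\infty$-Selmer groups in the quadratic twist family of $y^2=x^3-x$. Put $s_N=\dim_{\mathbb{F}_2}\mathrm{Sel}_2(E_N)-2$, which removes the contribution of the full rational $2$-torsion. Then $r_N\le s_N$. Combining \eqref{eq:rank-obstruction} with \eqref{eq:rank-bound} gives, for every $N\in\mathcal{S}$,
\begin{equation*}
 M(N)^q\le \mathbf{1}_{\{s_N\ge2\}}\,C_0^{q(1+s_N)}.
\end{equation*}
The problem thus reduces to bounding an exponential moment of $s_N$, restricted to $N\le X$ with $s_N\ge 2$.

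We split the sum at a threshold $k=k(X)$ to be chosen later.

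\emph{Terms with $2\le s_N\le k$.} Each such term is at most $C_0^{q(1+k)}$. The number of such $N\le X$ is at most $\#\{N\le X: s_N\ge2\}$. By Smith's theorem, the rank is $0$ or $1$ for $100\%$ of congruent-number twists, so this set has density zero. Smith's argument also gives an effective rate: the proportion of $N\le X$ with $2$-Selmer rank exceeding the parity-predicted minimum is $\ll \exp(-\gamma L(X)^{1/2})$. The triple logarithm enters through the convergence rate of the Markov chain governing Selmer ranks, once one conditions on the number of prime factors of $N$ (Swinnerton-Dyer--Kane). To get at least rank $2$ one needs extra $2$-Selmer rank or a $4$-Selmer contribution, and this is what Smith's higher Selmer control provides.

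\emph{Terms with $s_N>k$.} We use the uniform tail bound for the $2$-Selmer rank in this family, from Heath-Brown and Swinnerton-Dyer--Kane:
\begin{equation*}
 \#\{N\le X:s_N=j\}\ll X\,2^{-j^2/2+O(j)}
\end{equation*}
for $j$ up to a suitable size. The Gaussian decay in $j$ dominates the factor $C_0^{qj}$, so this tail contributes $\ll_q X\exp(-ck^2)$. For the very largest $j$, where the averaged bound is not available uniformly, we bound $s_N\ll\omega(N)\ll\log X/\log\log X$ and use the trivial count. This range has to be absorbed by the moment bound itself, which is why I would rely on the uniform moment estimates in Heath-Brown's theorem rather than on pointwise bounds.

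\emph{Choice of $k$.} Take $k\asymp L(X)^{1/4}$. The first range then gives $X\exp(-\gamma L(X)^{1/2}+q\log C_0\,L(X)^{1/4})$, which has exactly the form \eqref{eq:quantitative} with $D_q\asymp q\log C_0$. The second range gives $X\exp(-cL(X)^{1/2})$, which is of the same form. Letting $X\to\infty$ yields \eqref{eq:limit}.

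The main obstacle is the first range: extracting an explicit decay rate, of the form $\exp(-\gamma L(X)^{1/2})$, for the density of $N$ with $r_N\ge2$ from Smith's work. Smith's result is stated qualitatively. The plan is to rerun his Markov-chain and grid equidistribution argument with explicit error terms and accept whatever power of $\log\log\log X$ comes out; if necessary, $\gamma$ and the exponent $1/2$ would be adjusted to match.
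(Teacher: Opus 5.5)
There is a genuine gap, and it sits at the first step. By putting the $2$-Selmer rank $s_N$ inside the indicator, you discard exactly the information that makes the sum small. The set $\{N\le X:s_N\ge2\}$ does \emph{not} have density zero. One has $s_N-r_N=\dim_{\mathbb F_2}$ of the $2$-torsion of the Tate--Shafarevich group. By Heath-Brown's theorem, $s_N$ has a limiting distribution over squarefree $N$ that puts positive mass on every value of the admissible parity; for instance, $s_N=2$ for roughly $28\%$ of $N$. Smith's density-zero result concerns the $2^\infty$-Selmer corank, not $s_N$. Consequently your first range is only bounded by $C_0^{q(1+k)}\,\#\{N\le X:s_N\ge2\}\asymp_q XC_0^{qk}$, which is not even $o(X)$.

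The repair is to keep the rank in the indicator: $M(N)^q\le\mathbf 1_{r_N\ge2}\,e^{\alpha(1+r_N)}$ with $\alpha=q\log C_0$. The small range is then counted by $\#\mathcal R(X)$ via \eqref{eq:rare}. That rate is already stated in Smith's Theorem~1.2 and Remark~1.3, for the $2^\infty$-corank, which dominates $r_N$. Smith's result is therefore not merely qualitative, and nothing needs to be rerun. In any case, ``adjusting the exponent $1/2$'' would not prove the theorem as stated.

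The large range is a second gap. Heath-Brown's asymptotics for $\sum_{N\le X}2^{ks_N}$ are proved for each fixed $k$, with error terms that are not uniform in $k$. They therefore do not give $\#\{N\le X:s_N=j\}\ll X2^{-j^2/2+O(j)}$ uniformly for $j\gg L(X)^{1/4}$. The pointwise fallback $s_N\ll\omega(N)$ is useless without a count of the $N$ in that range, which is the same missing uniformity; note that $\sum_{N\le X}C_0^{q\omega(N)}\asymp_qX(\log X)^{C_0^q-1}$.

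With the corrected indicator, a single fixed moment bound $\sum_{N\le X}2^{Ks_N}\ll_KX$ with $K>q\log_2C_0$ does give the qualitative limit \eqref{eq:limit}: let $X\to\infty$ with $k$ fixed, then let $k\to\infty$. It does not give \eqref{eq:quantitative}, with absolute $\gamma$ and only an $L^{1/4}$ loss. That needs exponential moments whose order grows with $X$, which is Smith's uniform bound \eqref{eq:moments} for $1\le m<\kappa L(X)$.

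With that bound your split works. One has $\sum_{r_N>k}e^{\alpha r_N}\le e^{-mk}A_2Xe^{\beta(\alpha+m)^2}$. Taking $m=k/(2\beta)$ and $k=\lambda L(X)^{1/4}$ with $\lambda^2\ge4\beta\gamma$ gives $\ll_qX\exp(-\gamma L^{1/2}+\alpha\lambda L^{1/4})$, which matches the small range. The paper reaches the same bound in one step by H\"older's inequality, pairing $\#\mathcal R(X)$ with this moment at $p\asymp L(X)^{1/4}$. That handles all $N$ at once, including those of very large rank.
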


All logarithms are natural, and expressions involving $L(X)$ are used only for sufficiently large $X$. Theorem~\ref{thm:finiteness} is geometric and applies uniformly across all ranks. Theorem~\ref{thm:density} combines that bound with \eqref{eq:rank-obstruction}, the scarcity of twists of rank at least two, and exponential rank moments. The latter two inputs are due to Smith~\cite{SmithI,SmithII}. The moment method is part of the existing arithmetic framework; the specific ingredient here is the geometry of the positive cuboid configurations.

To make the consequence for the original objects explicit, let
\[
 \mathcal C(X)=\{\text{ordered cuboid similarity classes with }
                 \operatorname{sf}(abcg/2)\le X\}.
\]
The case $q=1$ gives
\begin{equation}\label{eq:classes-limit}
 \#\mathcal C(X)=\sum_{\substack{N\in\mathcal{S}\\N\le X}}M(N),
 \qquad \lim_{X\to\infty}\frac{\#\mathcal C(X)}{X}=0.
\end{equation}
This is density with respect to the specified squarefree invariant. It is not a count ordered by maximum edge length or projective height.


\section{The Paulsen--West model in positive coordinates}\label{sec:model}

We record the coordinate normalization needed below, using the correspondence of Paulsen--West~\cite[Proposition~3.1]{PW}. Their parameters satisfy
\begin{equation}\label{eq:PW}
 N\eta_i^2=t_i-t_i^3,\qquad 0<t_i<1,
 \qquad t_1t_2+t_1t_3+t_2t_3=1.
\end{equation}
For a cuboid with $abcg/2=Ns^2$, $s>0$, its half-angle parameters are
\begin{equation}\label{eq:half-angles}
 t_1=\frac{ag}{de+bc},\qquad
 t_2=\frac{bg}{df+ac},\qquad
 t_3=\frac{cg}{ef+ab}.
\end{equation}
The associated triangle has sides $af,be,cd$ and area $abcg/2$, so the curve parameter in \eqref{eq:PW} is exactly the invariant \eqref{eq:invariant}.

For the counting argument we use points on $E_N$ in the region $x>N$, $y>0$. An explicit map from the cuboid is
\begin{equation}\label{eq:point-map}
 \begin{aligned}
 x_1&=N\frac{de+ag}{bc},& y_1&=\frac{ag}{s}\,x_1,\\
 x_2&=N\frac{df+bg}{ac},& y_2&=\frac{bg}{s}\,x_2,\\
 x_3&=N\frac{ef+cg}{ab},& y_3&=\frac{cg}{s}\,x_3.
 \end{aligned}
\end{equation}
These are the usual congruent-number points obtained from the three right triangles
\[
 \frac1s(ag,bc,de),\qquad
 \frac1s(bg,ac,df),\qquad
 \frac1s(cg,ab,ef),
\]
each of area $N$. Indeed, for a rational right triangle $(U,V,H)$ with $UV=2N$, the formulas
\begin{equation}\label{eq:triangle-map}
 x=N\frac{H+U}{V},\qquad y=Ux
\end{equation}
give $x>N$, $y>0$, and $x^2-N^2=U^2x$, hence $y^2=x^3-N^2x$. The identity $U/(H+V)=(x-N)/(x+N)$ shows that
\begin{equation}\label{eq:mobius}
 t_i=\frac{x_i-N}{x_i+N}.
\end{equation}

In these coordinates, the compatibility equation in \eqref{eq:PW} becomes
\begin{equation}\label{eq:surface}
 \begin{aligned}
 F_N(x_1,x_2,x_3)
 ={}&x_1x_2x_3-N(x_1x_2+x_1x_3+x_2x_3)\\
    &-N^2(x_1+x_2+x_3)+N^3=0.
 \end{aligned}
\end{equation}
For example, the exact expansion is
\[
 1-\sum_{i<j}t_it_j
 =\frac{-2F_N(x_1,x_2,x_3)}{(x_1+N)(x_2+N)(x_3+N)}.
\]
Let $S_N\subset E_N^3$ be the inverse image of the multihomogeneous equation \eqref{eq:surface} under the three degree-two maps $x:E_N\to\mathbb P^1$. Define
\begin{equation}\label{eq:positive-set}
 S_N^+(\mathbb{Q})=\{(P_1,P_2,P_3)\in S_N(\mathbb{Q}):
              x(P_i)>N,\ y(P_i)>0\text{ for all }i\}.
\end{equation}

\begin{proposition}\label{prop:injection}
The map \eqref{eq:point-map} induces an injection from the classes counted by $M(N)$ into $S_N^+(\mathbb{Q})$. In particular,
\begin{equation}\label{eq:count-injection}
 M(N)\le\#S_N^+(\mathbb{Q}).
\end{equation}
\end{proposition}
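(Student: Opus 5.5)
We need three facts about the map \eqref{eq:point-map}: it is well defined on similarity classes, its image lies in $S_N^+(\mathbb{Q})$, and it is injective.

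\emph{Well defined.} Scaling all lengths by $v\in\mathbb{Q}_{>0}$ replaces $s$ by $v^2s$. The quotients $(de+ag)/(bc)$, $(df+bg)/(ac)$, $(ef+cg)/(ab)$ are homogeneous of degree zero, so the $x_i$ are unchanged. The ratios $ag/s$, $bg/s$, $cg/s$ are also of degree zero, so the $y_i$ are unchanged. Thus a similarity class determines a single triple $(P_1,P_2,P_3)$ of rational points.

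\emph{Image in $S_N^+(\mathbb{Q})$.} Each of the three triples
\[
 \tfrac1s(ag,bc,de),\qquad \tfrac1s(bg,ac,df),\qquad \tfrac1s(cg,ab,ef)
\]
is a positive rational right triangle of area $N$. For the first, $(ag)^2+(bc)^2=(a^2+b^2)(a^2+c^2)=d^2e^2$ follows from \eqref{eq:cuboid}, and its area is $abcg/(2s^2)=N$. The other two are analogous. Applying \eqref{eq:triangle-map} with $U$ the first leg gives $P_i\in E_N(\mathbb{Q})$ with $x_i>N$ and $y_i>0$.

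By \eqref{eq:mobius}, $t_i=(x_i-N)/(x_i+N)$. This equals the Paulsen--West half-angle parameter \eqref{eq:half-angles}, via the identity $U/(H+V)=(x-N)/(x+N)$. By \cite[Proposition~3.1]{PW} these parameters satisfy $\sum_{i<j}t_it_j=1$. The displayed expansion then gives $F_N(x_1,x_2,x_3)=0$, because $x_i+N>0$. Hence the triple lies in $S_N^+(\mathbb{Q})$. I will invoke Paulsen--West for the compatibility relation rather than recompute it from the half-angle formulas.

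\emph{Injectivity.} This is the step needing care. From $P_i$ I recover $x_i$ and $y_i$, hence $U_i=y_i/x_i$. This gives $ag/s$, $bg/s$, $cg/s$, so the ratios $a:b:c$ are determined. In a perfect cuboid the edges fix every other length up to the common scale: $d,e,f,g$ are the positive square roots of the quadratic expressions in \eqref{eq:cuboid}. Consequently the triple $(P_1,P_2,P_3)$ determines the similarity class.

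Ordering is respected, because the ordered edges are read off directly as $(a:b:c)$. Two classes with equal image therefore have proportional ordered edges and coincide. This proves injectivity and \eqref{eq:count-injection}.

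The main obstacle is checking that the parameters in \eqref{eq:half-angles} match the normalization in \cite{PW}, so that the relation $\sum t_it_j=1$ really applies. If needed, I would verify it directly. Using tangent half-angle identities, $t_1=\tan(\alpha/2)$ where $\alpha$ is the angle of the triangle with sides $af,be,cd$ opposite $af$. Since the half-angles of a triangle sum to $\pi/2$, the relation $\sum t_it_j=1$ follows.
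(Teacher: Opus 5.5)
Your proposal is correct and follows essentially the same route as the paper: you place the image in $S_N^+(\mathbb{Q})$ via the three area-$N$ right triangles, the M\"obius relation $t_i=(x_i-N)/(x_i+N)$ and the Paulsen--West compatibility $\sum_{i<j}t_it_j=1$, you show invariance under scaling because $s\mapsto v^2s$, and you get injectivity by reading off $a:b:c=y_1/x_1:y_2/x_2:y_3/x_3$. Your fallback check is also correct and makes the compatibility relation self-contained without citing \cite{PW}: here $t_1=\tan(\alpha/2)$ for the angle $\alpha$ opposite $af$ in the triangle with sides $af,be,cd$, which follows from $\cos\alpha=bc/(de)$.
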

\begin{proof}
The preceding formulas establish membership in $S_N^+(\mathbb{Q})$. Scaling all cuboid lengths by $v>0$ scales $s$ by $v^2$ and leaves the three points unchanged. Finally, the points determine the ordered edge ratios through
\[
 a:b:c=\frac{y_1}{x_1}:\frac{y_2}{x_2}:\frac{y_3}{x_3}.
\]
These ratios determine the similarity class. This proves injectivity.
\end{proof}

Only this injection is needed. The converse cuboid construction belongs to the established correspondence~\cite{PW} and is not reproved here.


\section{Geometry of the compatibility surface}\label{sec:geometry}

\subsection{Irreducibility and degree}

\begin{lemma}\label{lem:degree}
For every $N\in\mathcal{S}$, the surface $S_N$ is geometrically integral. If
\[
 D_i=\operatorname{pr}_i^{-1}(O),\qquad
 \Theta=D_1+D_2+D_3,\qquad \mathcal L=\mathcal O(3\Theta),
\]
then
\begin{equation}\label{eq:degree}
 [S_N]=2\Theta,\qquad \Theta^3=6,
 \qquad \deg_{\mathcal L}(S_N)=108.
\end{equation}
\end{lemma}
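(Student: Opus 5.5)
The plan is to handle the intersection-theoretic identities first and then geometric integrality, which is where the real work is.

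\emph{Class and degree computations.} Write $\pi=(x\circ\mathrm{pr}_1,x\circ\mathrm{pr}_2,x\circ\mathrm{pr}_3):E_N^3\to(\mathbb P^1)^3$. This is finite of degree $8$. The polynomial $F_N$ in \eqref{eq:surface} is multilinear, so it has multidegree $(1,1,1)$ and defines a divisor $T_N\subset(\mathbb P^1)^3$ of class $h_1+h_2+h_3$, where $h_i$ is the pullback of a point on the $i$th factor. Since $x:E_N\to\mathbb P^1$ has degree two and $x^{-1}(\infty)=2O$, we get $x^*(\infty)\sim 2O$ on $E_N$. Hence $\pi^*h_i\sim 2D_i$ and $[S_N]=\pi^*[T_N]=2\Theta$, provided $S_N$ is reduced (this comes out of the integrality step below). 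For $\Theta^3$, the divisors $D_1,D_2,D_3$ meet transversally in the single point $(O,O,O)$, while $D_i^2=0$ because $O\cdot O=0$ on an elliptic curve. Expanding $(D_1+D_2+D_3)^3$, the only surviving monomials are the $3!=6$ permutations of $D_1D_2D_3$, so $\Theta^3=6$. Finally
\[
\deg_{\mathcal L}(S_N)=(3\Theta)^2\cdot 2\Theta=18\,\Theta^3=108 .
\]

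\emph{Irreducibility of $T_N$.} First I will check that $T_N$ is geometrically irreducible. A $(1,1,1)$ form is reducible only if it factors as a $(1,0,0)$ form times a $(0,1,1)$ form, up to permutation. Solving $F_N$ for $x_3$ gives
\[
x_3=\frac{N(x_1x_2+Nx_1+Nx_2-N^2)}{x_1x_2-Nx_1-Nx_2-N^2}.
\]
This is a genuine degree-one Möbius function of $x_3$ in terms of $(x_1,x_2)$, and its numerator and denominator share no common factor. Indeed, their difference is $2N(x_1x_2-\dots)$, and a direct check shows that their resultant in $x_2$ is nonzero in $\mathbb Q(x_1)$. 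So $T_N$ is the graph of a birational map $(\mathbb P^1)^2\dashrightarrow\mathbb P^1$ and is irreducible, indeed rational.

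\emph{Irreducibility of $S_N$ (main obstacle).} The real difficulty is that a pullback of an irreducible surface under a $(\mathbb Z/2)^3$-Galois cover may split. Here $S_N\to T_N$ is generically Galois with group $\{\pm1\}^3$ acting by $P_i\mapsto -P_i$. Over the function field $K=\overline{\mathbb Q}(T_N)=\overline{\mathbb Q}(x_1,x_2)$, the cover is given by adjoining $\sqrt{f(x_1)}$, $\sqrt{f(x_2)}$ and $\sqrt{f(x_3)}$, where $f(x)=x^3-N^2x$. Thus $S_N$ is geometrically integral exactly when the classes of $f(x_1)$, $f(x_2)$ and $f(x_3)$ are independent in $K^\times/K^{\times2}$. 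By Kummer theory it is enough to show that no nonempty product is a square.

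I will prove this by finding, for each nonempty subset, a prime divisor of $(\mathbb P^1)^2$ at which the product has odd valuation. The divisors $x_1=0$ and $x_2=0$ separate $f(x_1)$ and $f(x_2)$. For $f(x_3)$, substituting $x_3=A/B$ gives $f(x_3)=AB^{-3}(A-NB)(A+NB)$. One then checks that $A$ contains an irreducible factor, such as the curve $x_1x_2+Nx_1+Nx_2-N^2=0$, which does not divide $x_1$, $x_2$ or $x_1^2-N^2$, and which appears to odd order. This prevents any product involving $f(x_3)$ from being a square. The same argument works over $\overline{\mathbb Q}$ and uses only $N\neq0$. It also shows that $S_N$ is reduced, which justifies the class computation above.
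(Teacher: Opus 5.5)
Your route is the paper's: the same graph description $x_3=A/B$ of the base surface, the same Kummer criterion, and the same three test divisors. These are $x_1=0$, $x_2=0$, and $(x_1+N)(x_2+N)=2N^2$, which is the paper's $A=0$ after rescaling $x_i=Nu_i$. The intersection computation giving $2\Theta$, $\Theta^3=6$ and $108$ is also the same.

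The gap is in the sentence ``Thus $S_N$ is geometrically integral exactly when the classes \dots\ are independent'', and in the later claim that the Kummer argument ``also shows that $S_N$ is reduced''. Independence of the square classes only shows that the \emph{generic fiber} of $S_N\to T_N$ is a field. That is, exactly one component of $S_N$ dominates $T_N$, and it occurs with multiplicity one. By itself this excludes neither components lying entirely over a proper closed subset of $T_N$ (for example over the boundary $x_i=\infty$ or the branch locus $x_i\in\{0,\pm N\}$) nor embedded components. So neither integrality nor reducedness of all of $S_N$ follows yet, and your class computation relies on the latter.

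The paper closes this with one extra step, and you need it or an equivalent. The map $\pi:E_N^3\to(\mathbb P^1)^3$ is finite and flat, being a product of finite surjective maps of smooth curves. Hence $S_N\to T_N$ is finite flat over the integral surface $T_N$. Over each affine open $U\subseteq T_N$, the coordinate ring of $S_N\cap\pi^{-1}(U)$ is therefore locally free, hence torsion-free, over the domain $\mathcal O(U)$. It thus embeds into its generic fiber, which is your Kummer field, so it is a domain, boundary included.

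Alternatively, $S_N$ is a Cartier divisor on the smooth threefold $E_N^3$, hence Cohen--Macaulay: it has pure dimension $2$ and no embedded points. Since $\pi$ is finite, each component maps onto a $2$-dimensional closed subset of the irreducible $T_N$, so every component dominates $T_N$.

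Two small slips are also worth fixing. The difference of $x_1x_2+Nx_1+Nx_2-N^2$ and $x_1x_2-Nx_1-Nx_2-N^2$ is $2N(x_1+x_2)$, not $2N(x_1x_2-\dots)$; together with their sum $2(x_1x_2-N^2)$, this gives coprimality immediately. For the test divisor you must also check that it does not divide $x_2^2-N^2$, $B$, or $A\pm NB$.
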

\begin{proof}
Over $\mathbb{C}$, the substitutions
\[
 x_i=Nu_i,\qquad y_i=N\sqrt N\,v_i
\]
identify all the surfaces with $S_1\subset E_1^3$. Write the three abscissas as $u,v,z$. The base surface in $(\mathbb P^1)^3$ is the closure of the graph
\begin{equation}\label{eq:graph}
 z=\frac{A}{D},\qquad
 A=uv+u+v-1,\quad D=uv-u-v-1.
\end{equation}
The polynomials $A,D$ are coprime. Thus this base surface is integral and has function field $\mathbb{C}(u,v)$.

Its inverse image in $E_1^3$ is obtained generically by adjoining square roots of
\[
 f_1=u(u^2-1),\qquad f_2=v(v^2-1),\qquad
 f_3=z(z^2-1).
\]
Their square classes are independent. Along the irreducible divisor
\[
 A=(u+1)(v+1)-2=0,
\]
the functions $f_1,f_2$ have order zero and $f_3$ has order one. Any product $f_1^{\epsilon_1}f_2^{\epsilon_2}f_3^{\epsilon_3}$ which is a square, with $\epsilon_i\in\{0,1\}$, must therefore have $\epsilon_3=0$. Valuations along $u=0$ and $v=0$ then give $\epsilon_1=\epsilon_2=0$. The generic inverse image consequently has a field of functions of degree eight over $\mathbb{C}(u,v)$.

The map $E_1^3\to(\mathbb P^1)^3$ is finite flat of degree eight. Its base change to the integral base surface remains finite flat. Its coordinate algebra over each affine open of the base surface embeds into its generic fiber, which is a field by the preceding calculation. Hence the entire inverse image is integral, including its boundary.

The base equation has multidegree $(1,1,1)$, and $x^*\mathcal O_{\mathbb P^1}(1)=\mathcal O_{E_N}(2O)$. Therefore $[S_N]=2\Theta$. Since $D_i^2=0$ and $D_1D_2D_3=1$, one has $\Theta^3=6$. Finally,
\[
 \deg_{\mathcal L}(S_N)=(2\Theta)(3\Theta)^2=108.
\]
The line bundle $\mathcal L$ is very ample, being the exterior product of the degree-three embeddings of the three elliptic factors.
\end{proof}

\subsection{The real obstruction to abelian cosets}

\begin{lemma}\label{lem:real-coset}
Let $p\in S_N(\mathbb{R})$ satisfy $p_i\ne O$ and $x(p_i)>N$ for $i=1,2,3$. There is no positive-dimensional abelian subvariety $B\subset E_N^3$, defined over $\mathbb{R}$, such that
\[
 p+B\subseteq S_N.
\]
\end{lemma}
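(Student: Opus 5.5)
The plan is to split according to $\dim B\in\{1,2\}$. The two-dimensional case is handled by intersection numbers from Lemma~\ref{lem:degree}. The one-dimensional case is handled by following the real one-parameter subgroup through $p$ until some coordinate reaches the real $2$-torsion point $T=(N,0)$, where $F_N$ forces an impossible sign condition.

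\emph{Case $\dim B=2$.} Since $S_N$ is an integral surface (Lemma~\ref{lem:degree}), $p+B\subseteq S_N$ would force $S_N=p+B$. A translate of an abelian subsurface has trivial normal bundle. Hence its class $\beta$ satisfies $\beta^2\cdot H=0$ for every ample $H$. But $[S_N]=2\Theta$ and $\Theta^3=6$ give $(2\Theta)^2\cdot 3\Theta=72\neq0$. This contradiction excludes $\dim B=2$.

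\emph{Case $\dim B=1$: reduction to the real circle.} Now $B$ is an elliptic curve, defined over $\mathbb R$. The identity component $B(\mathbb R)^0$ is a circle, and each projection $\operatorname{pr}_i|_B$ restricts to a continuous homomorphism $B(\mathbb R)^0\to E_N(\mathbb R)^0$, of the form $\theta\mapsto m_i\theta$ on $\mathbb R/\mathbb Z$ with $m_i\in\mathbb Z$. Not all $m_i$ vanish, because $B$ is positive-dimensional and its real points are Zariski dense in $B$. The hypothesis $x(p_i)>N$ places every $p_i$ in the identity component $E_N(\mathbb R)^0=\{O\}\cup\{x\ge N\}$. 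Hence the whole real arc $p+B(\mathbb R)^0$ stays in $(E_N(\mathbb R)^0)^3$, and along it every finite abscissa satisfies $x_j\ge N$. Choose $i$ with $m_i\neq0$. As $\theta$ runs over the circle, the coordinate $p_i+m_i\theta$ covers $E_N(\mathbb R)^0$. In particular it passes through $T$ at $|m_i|$ parameter values.

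\emph{Evaluation of $F_N$ at such points.} I would use the multihomogeneous form of \eqref{eq:surface}, which has multidegree $(1,1,1)$. Substituting $x_i=N$ gives $F_N=-2N^2(x_j+x_k)$. Two finite values $x_j,x_k\ge N$ cannot satisfy $x_j+x_k=0$. If exactly one of them is $\infty$, the relevant leading coefficient is the constant $-2N^2\neq0$. So the only points of $S_N$ with $p_i=T$ and the other two coordinates in the identity component are those with $p_j=p_k=O$. Similarly, at $p_i=O$ the equation becomes $(x_j-N)(x_k-N)=2N^2$, with the analogous boundary analysis.

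\emph{Main obstacle: excluding the configuration $(T,O,O)$.} It remains to rule out that, every time coordinate $i$ passes through $T$, both other coordinates sit at $O$. I would first pick $i$ with $|m_i|$ maximal and compare counts. The conditions $p_j+m_j\theta=O$ and $p_k+m_k\theta=O$ hold at most $|m_j|$ and $|m_k|$ times on the circle. They must hold at all $|m_i|$ times where coordinate $i$ equals $T$, and also at none of the $|m_i|$ times where coordinate $i$ equals $O$. That second requirement follows from the relation $(x_j-N)(x_k-N)=2N^2$, whose boundary analysis excludes $x_j=x_k=\infty$ there. Comparing these sets of parameter values should give the contradiction. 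If the combinatorics does not close, I would instead use the symmetric roles of the three coordinates. I would also use that $\theta\mapsto(m_1\theta,m_2\theta,m_3\theta)$ is injective up to finitely many points, so that the coincidences $\{p_i=T\}\subseteq\{p_j=p_k=O\}$ cannot hold for every admissible choice of $i$.
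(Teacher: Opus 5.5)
Your plan is sound and is essentially the paper's argument. You restrict to a real circle through $p$ inside $(E_N(\mathbb{R})^0)^3$, show that a coordinate equal to $T=(N,0)$ forces the other two to be $O$, and count preimages of $T$ and $O$ on the circle. The differences from the paper are organizational:
\begin{itemize}
\item The paper does not split by $\dim B$. Any positive-dimensional real torus $B(\mathbb{R})^0$ contains a circle subgroup $C$, not necessarily algebraic, and the whole argument runs on $p+C$. Your intersection-number exclusion of $\dim B=2$ ($(2\Theta)^2\cdot 3\Theta=72\neq 0$ against a trivial normal bundle) is correct but unnecessary. The case $\dim B=3$, which you omit, is trivial.
\item The paper packages the boundary behaviour of $F_N$ into the angle identity $\theta(P_1)+\theta(P_2)+\theta(P_3)=\pi$, with $\theta\in[0,\pi/2]$, $\theta^{-1}(0)=\{T\}$ and $\theta^{-1}(\pi/2)=\{O\}$. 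Your direct evaluation of the multihomogeneous form at $x_i=N$ and $x_i=\infty$ gives the same two facts.
\end{itemize}

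The one step your write-up does not yet close is the final count. Let $A_l$ and $Z_l$ be the parameter sets where coordinate $l$ equals $T$, respectively $O$. Comparing sets only for an index $i$ with $|m_i|$ maximal yields $|m_1|=|m_2|=|m_3|$ and $A_i=Z_j=Z_k$. This is fully consistent with $Z_i\cap Z_j\cap Z_k=\emptyset$, since $A_i\cap Z_i=\emptyset$, so there is no contradiction yet.

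What finishes the argument is to use the inclusion for every index:
\begin{itemize}
\item $A_l\subseteq Z_{l'}\cap Z_{l''}$ for each $\{l,l',l''\}=\{1,2,3\}$.
\item The $A_l$ are pairwise disjoint, because $p_l=T$ forces $p_{l'}=O\neq T$.
\item Since $p_l\neq O,T$, one has $|A_l|=|Z_l|=|m_l|$ even when $m_l=0$.
\end{itemize}
Then $Z_j\supseteq A_i\sqcup A_k$ gives $|m_j|\ge |m_i|+|m_k|$ for every permutation. Summing gives $|m_1|+|m_2|+|m_3|\le 0$, contradicting that some $m_i\neq 0$. This is exactly how the paper concludes, and it absorbs the constant-projection case automatically. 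Your injectivity remark about $\theta\mapsto(m_1\theta,m_2\theta,m_3\theta)$ is not needed.
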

\begin{proof}
The identity component $E_N(\mathbb{R})^0$ is a circle, with abscissas in $[N,+\infty]$. Write $T_+=(N,0)$, and define on this circle
\begin{equation}\label{eq:angle}
 \theta(P)=2\arctan\frac{x(P)-N}{x(P)+N},
 \qquad \theta(O)=\frac\pi2.
\end{equation}
This is continuous, takes values in $[0,\pi/2]$, and satisfies
\begin{equation}\label{eq:endpoints}
 \theta^{-1}(0)=\{T_+\},\qquad
 \theta^{-1}(\pi/2)=\{O\}.
\end{equation}

On $S_N\cap(E_N(\mathbb{R})^0)^3$, including the boundary, one has
\begin{equation}\label{eq:angle-sum}
 \theta(P_1)+\theta(P_2)+\theta(P_3)=\pi.
\end{equation}
Indeed, the fractional linear coordinates in \eqref{eq:mobius} extend to $t_i(O)=1$ and lie in $[0,1]$. The multihomogeneous equation remains exactly $\sum_{i<j}t_it_j=1$. For $\alpha_i=\arctan t_i\in[0,\pi/4]$,
\[
 \cos(\alpha_1+\alpha_2+\alpha_3)
 =\prod_i\cos\alpha_i\left(1-\sum_{i<j}t_it_j\right)=0.
\]
Since the sum of the $\alpha_i$ belongs to $[0,3\pi/4]$, it equals $\pi/2$, proving \eqref{eq:angle-sum}.

Suppose now that $p+B\subseteq S_N$. The compact connected real Lie group $B(\mathbb{R})^0$ is a positive-dimensional torus. Choose a circle subgroup $C\subseteq B(\mathbb{R})^0$ and consider the translated circle $p+C$. Its three coordinate projections lie in $E_N(\mathbb{R})^0$. Each projection is either constant or a finite covering of that circle, and at least one is nonconstant.

If at least one projection is constant, let its coordinate index be $i$, so its angle is identically $c=\theta(p_i)\in(0,\pi/2)$. Since $C$ is positive-dimensional, there is a nonconstant projection with index $j\ne i$. This projection covers the circle and reaches $T_+$, where its angle is zero. At that point on $p+C$, the remaining coordinate $k\in\{1,2,3\}\setminus\{i,j\}$ would be forced by \eqref{eq:angle-sum} to have angle $\pi-c-0 = \pi-c > \pi/2$, which is impossible since $\theta$ takes values in $[0,\pi/2]$ (regardless of whether the $k$-th projection is constant or nonconstant).

It remains to consider three nonconstant projections, with covering degrees $n_1,n_2,n_3\ge1$. For the $i$th projection, let $A_i\subset C$ be the inverse image of $T_+$ and $Z_i\subset C$ the inverse image of $O$. Then
\[
 |A_i|=|Z_i|=n_i.
\]
At a parameter in $A_i$, the other two angles must both equal $\pi/2$. Thus $A_i\subseteq Z_j\cap Z_k$ whenever $\{i,j,k\}=\{1,2,3\}$. The three sets $A_i$ are pairwise disjoint, since two zero angles cannot occur in \eqref{eq:angle-sum}. Consequently,
\[
 n_1\ge n_2+n_3,\qquad
 n_2\ge n_1+n_3,\qquad
 n_3\ge n_1+n_2.
\]
Their sum is incompatible with $n_1+n_2+n_3>0$. This contradiction proves the lemma.
\end{proof}


\section{Finiteness and a bound uniform in the parameter}\label{sec:finiteness}

\subsection{One positive point on each geometric coset}

The standard rational torsion calculation for congruent-number curves gives
\begin{equation}\label{eq:torsion}
 E_N(\mathbb{Q})_{\rm tors}=\{O,(0,0),(N,0),(-N,0)\};
\end{equation}
see Koblitz~\cite{Koblitz}. We first note that the region $x>N$, $y>0$ contains at most one point of each rational torsion orbit. For $P=(x,y)$ in that region, the nonzero torsion translations satisfy
\begin{equation}\label{eq:torsion-maps}
 \begin{aligned}
 x(P+(0,0))&=-N^2/x<0,\\
 x(P+(-N,0))&=-N(x-N)/(x+N)<0,\\
 x(P+(N,0))&=N(x+N)/(x-N)>N,\\
 y(P+(N,0))&=-2N^2y/(x-N)^2<0.
 \end{aligned}
\end{equation}
Thus no nonzero rational torsion translation preserves this region.

\begin{lemma}\label{lem:one-point}
For any translate $C=t+B\subseteq S_N$ of an abelian subvariety of $E_N^3$ over $\overline{\mathbb{Q}}$,
\begin{equation}\label{eq:one-point}
 \#\bigl(C\cap S_N^+(\mathbb{Q})\bigr)\le1.
\end{equation}
\end{lemma}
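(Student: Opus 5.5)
The plan is to reduce the statement to Lemma~\ref{lem:real-coset} by replacing the given coset, which is only defined over $\overline{\mathbb{Q}}$, with a smaller coset that is defined over $\mathbb{Q}$ and passes through a positive rational point.

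Suppose $P,Q\in C\cap S_N^+(\mathbb{Q})$. We must show $P=Q$. Since $P\in C=t+B$, we may rewrite $C=P+B$, and then $R:=Q-P\in B(\overline{\mathbb{Q}})$. Because $P$ and $Q$ are rational, $R\in E_N^3(\mathbb{Q})$. Let $G$ be the Zariski closure of the cyclic group $\mathbb{Z}R$ in $E_N^3$, and let $B'$ be its identity component.

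First, $G$ is defined over $\mathbb{Q}$, being the closure of a set of rational points. It is an algebraic subgroup, so $B'$ is an abelian subvariety defined over $\mathbb{Q}$, and in particular over $\mathbb{R}$. Second, $R\in B$ and $B$ is a closed subgroup, so $\mathbb{Z}R\subseteq B$ and hence $G\subseteq B$. This gives
\[
 P+B'\subseteq P+G\subseteq P+B=C\subseteq S_N .
\]

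If $B'$ is positive-dimensional, we apply Lemma~\ref{lem:real-coset} with $p=P$. This point lies in $S_N(\mathbb{R})$, and its coordinates satisfy $x(P_i)>N$, so $P_i\ne O$. The lemma then forbids $P+B'\subseteq S_N$, which is a contradiction.

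Hence $B'=0$. Then $G$ is finite, so $R$ is a rational torsion point of $E_N^3$, and each coordinate $R_i=Q_i-P_i$ lies in $E_N(\mathbb{Q})_{\rm tors}$. By \eqref{eq:torsion} and \eqref{eq:torsion-maps}, no nonzero rational torsion translation maps a point of the region $x>N$, $y>0$ back into that region. Both $P_i$ and $Q_i=P_i+R_i$ lie in the region, so $R_i=O$ for each $i$, and therefore $Q=P$.

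The main obstacle is the field-of-definition issue. Lemma~\ref{lem:real-coset} requires $B$ to be defined over $\mathbb{R}$, but the coset in the statement is only over $\overline{\mathbb{Q}}$. Since $E_N$ has CM by $\mathbb{Z}[i]$, abelian subvarieties of $E_N^3$ may genuinely be defined only over $\mathbb{Q}(i)$. The Zariski-closure device resolves this: it produces $B'\subseteq B$ that is automatically defined over $\mathbb{Q}$, while keeping the translate through $P$ inside $S_N$. The points to check carefully are that $G$ is indeed a $\mathbb{Q}$-subgroup with identity component $B'$ over $\mathbb{Q}$, and that containment $P+B'\subseteq S_N$ uses only $R\in B$.
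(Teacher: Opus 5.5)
Your proof is correct and is essentially the paper's argument. The only difference is that you take the Zariski closure of the cyclic group $\mathbb{Z}(Q-P)$ instead of the whole group $B\cap E_N(\mathbb{Q})^3$; the rest matches: a $\mathbb{Q}$-rational identity component inside $B$, ruled out by Lemma~\ref{lem:real-coset}, followed by the torsion computation \eqref{eq:torsion-maps}.
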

\begin{proof}
There is nothing to prove if the intersection is empty. Otherwise choose $p$ in it, so $C=p+B$. Put $\Gamma=E_N(\mathbb{Q})^3$. Then
\[
 C\cap\Gamma=p+(B\cap\Gamma).
\]
Let $H$ be the Zariski closure of the subgroup $B\cap\Gamma$. It is an algebraic subgroup of $E_N^3$. Because its defining dense set consists of rational points, it is defined over $\mathbb{Q}$. If $B\cap\Gamma$ were infinite, the identity component $H^0$ would be positive-dimensional. Since $H\subseteq B$, the inclusion
\[
 p+H^0\subseteq S_N
\]
would contradict Lemma~\ref{lem:real-coset}.

Therefore $B\cap\Gamma$ is finite and consists of torsion points. Any two points of $C\cap S_N^+(\mathbb{Q})$ differ coordinatewise by rational torsion. Equations~\eqref{eq:torsion}--\eqref{eq:torsion-maps} show that they are equal.
\end{proof}

The argument includes cosets defined only over $\overline{\mathbb{Q}}$. It is not necessary to assume that the original $B$ descends to $\mathbb{R}$; the rational points produce the subgroup $H^0$ to which the real lemma applies.

\subsection{Quantitative Mordell--Lang}

We use the following result of David, Nakamaye, and Philippon. Let $g\ge2$, let $E$ be an elliptic curve over $\overline{\mathbb{Q}}$, let $V\subseteq E^g$ be a subvariety, and let $\Gamma\subseteq E^g(\overline{\mathbb{Q}})$ be a subgroup of finite rank $r$. For the polarization
\[
 \mathcal L=\bigotimes_{i=1}^g\operatorname{pr}_i^*\mathcal O_E(3O),
\]
the set $V\cap\Gamma$ is covered by translates of abelian subvarieties contained in $V$, whose number is at most
\begin{equation}\label{eq:DNP}
 \bigl(\deg_{\mathcal L}V\bigr)^{
 60(r+1)(g+1)^{4(\dim V+1)^2+2}}.
\end{equation}
This is~\cite[Theorem~1.11]{DNP}.

\begin{proof}[Proof of Theorem~\ref{thm:finiteness}]
Apply \eqref{eq:DNP} with $E=E_N$, $g=3$, $V=S_N$, and $\Gamma=E_N(\mathbb{Q})^3$. By Mordell--Weil, $\Gamma$ has finite rank $3r_N$. Lemma~\ref{lem:degree} gives $\dim S_N=2$ and $\deg_{\mathcal L}S_N=108$, so the number of covering cosets is at most
\[
 108^{60(3r_N+1)4^{38}}.
\]
Each coset contributes at most one point of $S_N^+(\mathbb{Q})$ by Lemma~\ref{lem:one-point}. Proposition~\ref{prop:injection} therefore gives
\begin{equation}\label{eq:explicit-bound}
 M(N)\le\#S_N^+(\mathbb{Q})
 \le108^{60(3r_N+1)4^{38}}
 \le\left(108^{180\cdot4^{38}}\right)^{1+r_N}.
\end{equation}
This proves the theorem for every $N$, with no rank restriction. The rank obstruction \eqref{eq:rank-obstruction} has not been used in this proof.
\end{proof}

For finiteness alone, ordinary Mordell--Lang and Lemma~\ref{lem:one-point} suffice. The quantitative form is needed to control the sum of the multiplicities $M(N)$ as $N$ varies. The constant in \eqref{eq:explicit-constant} is an admissible theoretical bound; no practical enumeration threshold is asserted.


\section{The rank estimates used in the density argument}\label{sec:statistics}

Let
\[
 \mathcal R(X)=\{N\in\mathcal{S}:N\le X,\ r_N\ge2\},
 \qquad L(X)=\log\log\log X.
\]
We need two consequences of Smith's work, with their precise parameter ranges.

\begin{proposition}[Rank statistics]\label{prop:statistics}
There exist positive constants $A_1,A_2,\gamma,\beta,\kappa$ and $X_0$ such that, for every $X\ge X_0$,
\begin{equation}\label{eq:rare}
 \#\mathcal R(X)\le A_1X\exp\bigl(-\gamma\sqrt{L(X)}\bigr),
\end{equation}
and, uniformly for real $m$ satisfying $1\le m<\kappa L(X)$,
\begin{equation}\label{eq:moments}
 \sum_{\substack{N\in\mathcal{S}\\N\le X}}e^{mr_N}
 \le A_2X e^{\beta m^2}.
\end{equation}
\end{proposition}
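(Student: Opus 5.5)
This proposition should be extracted from Smith's work on $2^\infty$-Selmer groups in the congruent-number family \cite{SmithI,SmithII}, rather than proved from first principles. The plan is to reduce both estimates to statements about the $2^\infty$-Selmer corank $s_N$ of $E_N$. Since $r_N\le s_N$ for every $N$, upper bounds on $s_N$ transfer directly to $r_N$. So I would first recall Smith's distributional theorem: for squarefree $N\le X$ (split by residue class mod $8$), the $2$-Selmer rank distribution converges to the Poonen--Rains/Heath-Brown type law. Crucially, Smith's proof is effective, with an error term decaying like a power of $L(X)^{-1}$ or, in the sharper form, like $\exp(-c\sqrt{L(X)})$, because his Markov-chain and Ramsey/additive-combinatorics steps lose only in the triple-logarithm scale.

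For \eqref{eq:rare}, the argument uses the $4$-Selmer or higher Selmer control. By parity of the $2^\infty$-Selmer corank (Dokchitser--Dokchitser type results, which Smith uses), the corank is even or odd according to the root number. Smith shows that the proportion of $N$ with $s_N\ge2$ tends to zero, since the $2^\infty$-Selmer corank is $0$ or $1$ for almost all $N$. The quantitative version bounds the exceptional set by $X\exp(-\gamma\sqrt{L(X)})$. I would quote the effective statement in \cite[Theorem 1.3 or its proof]{SmithII} and then set $A_1,\gamma,X_0$ accordingly.

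For \eqref{eq:moments}, I would use the fact that the $2$-Selmer rank $\sigma_N=\dim\operatorname{Sel}_2(E_N)-2$ satisfies $r_N\le\sigma_N$. The exponential moments of $\sigma_N$ are governed by Heath-Brown's moment computation \cite{SmithI} (Heath-Brown's method extended by Smith), which gives $\sum_{N\le X}2^{k\sigma_N}\ll X\prod_{j\le k}(1+2^j)$ uniformly for $k$ up to a small multiple of $L(X)$ or better. Because $\prod_{j\le k}(1+2^j)\le e^{O(k^2)}$, taking $k=\lceil m/\log2\rceil$ gives $e^{mr_N}\le 2^{k\sigma_N}$ and hence the bound $A_2Xe^{\beta m^2}$. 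Here the range $m<\kappa L(X)$ is whatever range of uniformity in $k$ the cited moment bound permits; the triple log is certainly safe.

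The main obstacle is bookkeeping rather than mathematics: one must check that the cited theorems genuinely give uniformity in the moment index up to $\kappa L(X)$, and an explicit rate of the form $\exp(-\gamma\sqrt{L(X)})$ for the rank-$\ge2$ set. If only $o(X)$ is available for \eqref{eq:rare}, I would instead derive a rate from the Markov-chain mixing estimate in Smith's proof, which converges geometrically in the number of prime-factor steps, of size about $\sqrt{\log\log X}$ after his grid restriction. That step is where I expect to spend the effort.
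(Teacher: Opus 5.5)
Your treatment of \eqref{eq:rare} follows the paper's proof. You quote Smith's effective bound for twists of $E_1$ with $2^\infty$-Selmer corank at least two, and then use $r_N\le\operatorname{corank}_{\mathbb{Z}_2}\operatorname{Sel}_{2^\infty}(E_N/\mathbb{Q})$, which needs no assumption on the Shafarevich--Tate group. The rate $\exp(-\gamma\sqrt{L(X)})$ is already in \cite[Theorem~1.2, Remark~1.3]{SmithI}, so the Markov-chain fallback is unnecessary. As sketched, that fallback would not be a proof anyway. What you omit is the hypothesis check. $E_1$ has full rational $2$-torsion and no rational cyclic $4$-isogeny; the latter follows from the root-difference criterion of \cite[Example~3.1]{SmithII}, since $-1,2,2$ are not squares. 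Also, Smith's count runs over all nonzero integer twists, so restricting to positive squarefree $N$ only decreases it.

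The genuine gap is in \eqref{eq:moments}. Your reduction is fine: $e^{mr_N}\le 2^{k\sigma_N}$ with $k=\lceil m/\log 2\rceil$, and $\prod_{j\le k}(1+2^j)=e^{O(k^2)}$. But everything then rests on the bound $\sum_{N\le X}2^{k\sigma_N}\ll X\prod_{j\le k}(1+2^j)$, with an implied constant independent of $k$, valid for all $k$ up to a multiple of $L(X)$. That uniformity is the entire content of \eqref{eq:moments}: the H\"older step in the proof of Theorem~\ref{thm:density} uses $m\asymp L(X)^{1/4}\to\infty$. You give no source for it. Heath-Brown's moment theorems are proved for each fixed $k$ (and for odd $N$ in fixed classes mod $8$), with $k$-dependent error terms. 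On their own they give no range of uniformity. You attribute the extension to \cite{SmithI} without pointing to a statement, and ``the triple log is certainly safe'' is an assertion, not bookkeeping.

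The paper avoids this by citing a bound that is already uniform, \cite[Theorem~1.1]{SmithII}. Let $\mathcal K(H)$ be the set of quadratic fields with $|\operatorname{Disc}K|\le H$. Then $\sum_{K\in\mathcal K(H)}e^{m\operatorname{rank}E_1(K)}\le e^{Cm^2}\#\mathcal K(H)$ for $H>C$ and $1\le m<c\log\log\log H$. This transfers to the congruent-number family via $K_N=\mathbb{Q}(\sqrt N)$:
\begin{itemize}
\item the isomorphism $E_N\cong E_1$ over $K_N$ gives $r_N\le\operatorname{rank}E_1(K_N)$;
\item the map $N\mapsto K_N$ is injective on squarefree $N>1$;
\item $|\operatorname{Disc}K_N|\le 4N$ and $\#\mathcal K(4X)=O(X)$;
\item the single term $N=1$ is absorbed using $m\le m^2$.
\end{itemize}
Unless you can cite a $2$-Selmer moment bound with explicit uniformity in $k$, this is the argument to use in place of your moment paragraph.
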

\begin{proof}
For \eqref{eq:rare}, apply Smith~\cite[Assumption~1.1(3), Theorem~1.2, Remark~1.3]{SmithI} to $E_1:y^2=x(x-1)(x+1)$. It has full rational two-torsion and no rational cyclic isogeny of degree four. The latter condition follows, for example, from the root-difference criterion in~\cite[Example~3.1, equation~(3.1)]{SmithII}: the three relevant products are $-1,2,2$, none a rational square. Smith's bound counts all nonzero integer twist parameters with $2^\infty$-Selmer corank at least two. Restricting to positive squarefree parameters and using
\[
 r_N\le \operatorname{corank}_{\mathbb{Z}_2}\operatorname{Sel}_{2^\infty}(E_N/\mathbb{Q})
\]
gives \eqref{eq:rare}. This inequality does not require finiteness of the Shafarevich--Tate group.

For \eqref{eq:moments}, use~\cite[Theorem~1.1]{SmithII} with the fixed abelian variety $E_1/\mathbb{Q}$ and quadratic extensions of $\mathbb{Q}$. In the notation of that theorem, if $\mathcal K(H)$ denotes the quadratic fields of absolute discriminant at most $H$, then
\begin{equation}\label{eq:Smith-field-moment}
 \sum_{K\in\mathcal K(H)}
      e^{m\operatorname{rank} E_1(K)}
 \le e^{C m^2}\#\mathcal K(H)
\end{equation}
for $H>C$ and $1\le m<c\log\log\log H$, with constants depending only on $E_1$.

For $N>1$, take $K_N=\mathbb{Q}(\sqrt N)$. Distinct squarefree $N$ give distinct fields, and
\[
 |\operatorname{Disc} K_N|\le4N,
 \qquad r_N\le\operatorname{rank} E_1(K_N).
\]
The rank inequality follows from the isomorphism
\[
 E_N\longrightarrow E_1,
 \qquad (x,y)\longmapsto\left(\frac{x}{N},\frac{y}{N\sqrt N}\right)
\]
over $K_N$. Since $\#\mathcal K(4X)=O(X)$, inequality~\eqref{eq:Smith-field-moment} implies \eqref{eq:moments} after adjusting the constants. The single $N=1$ term can be absorbed uniformly for $m\ge1$, since its rank is fixed and $m\le m^2$. Decreasing $\kappa$ if necessary gives the stated common range in terms of $L(X)$.
\end{proof}


\section{Proof of weighted zero density}\label{sec:density-proof}

\begin{proof}[Proof of Theorem~\ref{thm:density}]
Fix $q>0$ and put
\[
 \alpha=q\log C_0>0,
 \qquad L=L(X).
\]
By \eqref{eq:rank-obstruction}, every nonzero summand comes from $\mathcal R(X)$. By Theorem~\ref{thm:finiteness},
\[
 M(N)^q\le e^{\alpha(1+r_N)}.
\]
For any real $p>1$ such that $1\le\alpha p<\kappa L$, H\"older's inequality and Proposition~\ref{prop:statistics} give
\begin{align}
 \sum_{\substack{N\in\mathcal{S}\\N\le X}}M(N)^q
 &\le e^\alpha\bigl(\#\mathcal R(X)\bigr)^{1-1/p}
     \left(\sum_{\substack{N\in\mathcal{S}\\N\le X}}
                       e^{\alpha p r_N}\right)^{1/p}\notag\\
 &\le e^\alpha
       \left(A_1Xe^{-\gamma\sqrt L}\right)^{1-1/p}
       \left(A_2Xe^{\beta\alpha^2p^2}\right)^{1/p}\notag\\
 &\le e^\alpha A_3X
       \exp\left(-\gamma\sqrt L+
                 \frac{\gamma\sqrt L}{p}+\beta\alpha^2p\right),
 \label{eq:holder}
\end{align}
where $A_3=\max\{1,A_1,A_2\}$ is independent of $p$ and $X$. If $\mathcal R(X)$ is empty, the desired bound is immediate and the same displayed upper bound still holds.

Choose
\begin{equation}\label{eq:p-choice}
 p=\frac{\sqrt\gamma}{\alpha\sqrt\beta}\,L^{1/4}.
\end{equation}
Because $q$ is fixed, sufficiently large $X$ ensures $p>1$ and
\[
 1\le\alpha p=\sqrt{\gamma/\beta}\,L^{1/4}<\kappa L.
\]
Thus this choice lies in the uniform moment range. Substitution into \eqref{eq:holder} yields
\begin{equation}\label{eq:optimized}
 \sum_{\substack{N\in\mathcal{S}\\N\le X}}M(N)^q
 \le e^\alpha A_3X
       \exp\left(-\gamma L^{1/2}
                 +2\alpha\sqrt{\beta\gamma}\,L^{1/4}\right).
\end{equation}
This is \eqref{eq:quantitative}, with
\[
 K_q=e^\alpha A_3,
 \qquad D_q=2q(\log C_0)\sqrt{\beta\gamma}.
\]

Finally, $L(X)\to\infty$ and
\[
 -\gamma L(X)^{1/2}+D_qL(X)^{1/4}\longrightarrow-\infty.
\]
Dividing \eqref{eq:quantitative} by $X$ therefore gives the explicit limit
\begin{equation}\label{eq:explicit-zero}
 0\le\frac1X\sum_{\substack{N\in\mathcal{S}\\N\le X}}M(N)^q
 \le K_q\exp\left(-\gamma L(X)^{1/2}+D_qL(X)^{1/4}\right)
 \longrightarrow0.
\end{equation}
The sum includes all squarefree $N$ and all associated ranks.
\end{proof}

\begin{corollary}\label{cor:near-rate}
For fixed $q>0$ and $0<\varepsilon<\gamma$,
\[
 \sum_{\substack{N\in\mathcal{S}\\N\le X}}M(N)^q
 \ll_{q,\varepsilon}
 X\exp\left(- (\gamma-\varepsilon)
                    \sqrt{\log\log\log X}\right).
\]
\end{corollary}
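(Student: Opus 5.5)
The corollary should follow directly from the quantitative estimate \eqref{eq:quantitative} of Theorem~\ref{thm:density}. The plan is to absorb the secondary term $D_qL(X)^{1/4}$ into an $\varepsilon$-fraction of the main decay term $\gamma L(X)^{1/2}$.

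Fix $q>0$ and $0<\varepsilon<\gamma$, and write $L=L(X)$. By \eqref{eq:quantitative}, for $X\ge X_q$ the sum is at most $K_qX\exp(-\gamma L^{1/2}+D_qL^{1/4})$. We have $D_qL^{1/4}\le\varepsilon L^{1/2}$ exactly when $L^{1/4}\ge D_q/\varepsilon$, that is, when
\[
 \log\log\log X\ge (D_q/\varepsilon)^4.
\]
This holds for all $X\ge X_{q,\varepsilon}$, for some threshold depending only on $q$ and $\varepsilon$. Hence for $X\ge\max\{X_q,X_{q,\varepsilon}\}$ the sum is bounded by
\[
 K_qX\exp\bigl(-(\gamma-\varepsilon)\sqrt{\log\log\log X}\bigr).
\]

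It remains to treat the bounded range of $X$, which is where the implied constant must be chosen with some care. The expression $L(X)$ is only defined and positive once $X>e^e$. For $X$ in a compact range $[X_1,X_2]$ with $X_1$ large enough that $L(X_1)>0$, I would proceed as follows. The sum is finite, since it has finitely many terms, each finite by Theorem~\ref{thm:finiteness}. The right-hand side is bounded below by a positive constant. Enlarging the implied constant therefore covers this range.

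For smaller $X$, the $\ll$ statement is understood in the paper's convention that expressions involving $L(X)$ apply only for sufficiently large $X$. So it suffices to state the bound for $X$ beyond a threshold depending on $q$ and $\varepsilon$. There is no real obstacle here: the only point needing attention is that all constants depend only on $q$ and $\varepsilon$, which is true because $K_q$, $D_q$ and $X_q$ depend only on $q$, and $\gamma$ is absolute.
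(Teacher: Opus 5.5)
Your proof is correct and follows the paper's argument: the paper also simply notes that $D_qL(X)^{1/4}\le\varepsilon L(X)^{1/2}$ for sufficiently large $X$ and reads the bound off \eqref{eq:quantitative}. Your extra treatment of a bounded range of $X$ is harmless additional care that the paper leaves to its convention that expressions in $L(X)$ are used only for large $X$.
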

\begin{proof}
For sufficiently large $X$, the positive term $D_qL(X)^{1/4}$ in \eqref{eq:quantitative} is at most $\varepsilon L(X)^{1/2}$.
\end{proof}


\section{Interpretation for the cuboid problem}\label{sec:interpretation}

Let $Q(X)=\#\{N\in\mathcal{S}:N\le X\}$. The elementary squarefree counting formula is
\begin{equation}\label{eq:squarefree-count}
 Q(X)=\frac6{\pi^2}X+O(\sqrt X).
\end{equation}
For completeness, the identity $\mu(n)^2=\sum_{d^2\mid n}\mu(d)$ gives
\[
 Q(X)=\sum_{d\le\sqrt X}\mu(d)\left\lfloor\frac{X}{d^2}\right\rfloor
 =X\sum_{d=1}^{\infty}\frac{\mu(d)}{d^2}+O(\sqrt X),
\]
and the infinite sum equals $1/\zeta(2)=6/\pi^2$.

\begin{corollary}\label{cor:probability}
For every fixed $q>0$,
\begin{equation}\label{eq:average-sf}
 \frac1{Q(X)}\sum_{\substack{N\in\mathcal{S}\\N\le X}}M(N)^q
 \longrightarrow0.
\end{equation}
In particular, the squarefree parameters admitting any perfect cuboid have relative density zero:
\begin{equation}\label{eq:support-density}
 \frac{\#\{N\in\mathcal{S}:N\le X,\ M(N)>0\}}{Q(X)}
 \longrightarrow0.
\end{equation}
\end{corollary}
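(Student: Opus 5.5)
The corollary follows by normalizing Theorem~\ref{thm:density} by $Q(X)$ instead of $X$. The first claim \eqref{eq:average-sf} is the limit \eqref{eq:limit} rescaled. By the squarefree counting formula \eqref{eq:squarefree-count}, $Q(X)\ge \tfrac{3}{\pi^2}X$ for all sufficiently large $X$, since the error term $O(\sqrt X)$ is eventually smaller than $\tfrac{3}{\pi^2}X$. Hence
\[
 0\le\frac1{Q(X)}\sum_{\substack{N\in\mathcal{S}\\N\le X}}M(N)^q
 \le\frac{\pi^2}{3}\cdot\frac1X\sum_{\substack{N\in\mathcal{S}\\N\le X}}M(N)^q ,
\]
and the right-hand side tends to $0$ by \eqref{eq:limit}. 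The quantitative bound \eqref{eq:explicit-zero} can be used instead to get the same rate, up to the constant factor $\pi^2/3$.

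For the support statement \eqref{eq:support-density}, I would specialize to any fixed $q$, say $q=1$, and use the trivial pointwise inequality $\mathbf 1_{M(N)>0}\le M(N)^q$. This holds because $M(N)$ is a nonnegative integer, which is finite by Theorem~\ref{thm:finiteness}, so $M(N)>0$ implies $M(N)\ge1$ and hence $M(N)^q\ge1$. Summing over squarefree $N\le X$ bounds the numerator of \eqref{eq:support-density} by $\sum M(N)^q$. The first part of the corollary then gives the limit $0$.

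There is no real obstacle here. The only points to check are that $Q(X)$ grows linearly, which is supplied by \eqref{eq:squarefree-count}, and that the finiteness of each $M(N)$ makes the indicator bound meaningful, which is supplied by Theorem~\ref{thm:finiteness}. An alternative route to \eqref{eq:support-density} is to note $\{N:M(N)>0\}\subseteq\mathcal R(X)$ by \eqref{eq:rank-obstruction} and apply \eqref{eq:rare} directly. I would mention this as a remark, but I would follow the route through \eqref{eq:average-sf} so that the corollary depends only on Theorem~\ref{thm:density}.
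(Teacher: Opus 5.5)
Your proposal is correct and follows the paper's proof: rescale the limit of Theorem~\ref{thm:density} by $Q(X)$ using the squarefree count \eqref{eq:squarefree-count}, then deduce \eqref{eq:support-density} from the pointwise bound $\mathbf 1_{M(N)>0}\le M(N)^q$, which holds because $M(N)$ is a nonnegative integer. Your explicit lower bound $Q(X)\ge \tfrac{3}{\pi^2}X$ for large $X$ just spells out the step the paper leaves implicit.
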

\begin{proof}
Equation~\eqref{eq:average-sf} follows from \eqref{eq:limit} and \eqref{eq:squarefree-count}. Since $M(N)$ is a nonnegative integer, $\mathbf 1_{M(N)>0}\le M(N)^q$, and \eqref{eq:support-density} follows.
\end{proof}

Equivalently, if $N$ is chosen uniformly from the squarefree integers up to $X$, every fixed positive moment of the number of cuboid classes tends to zero. This controls the multiplicity at each parameter, not only the size of the set of admissible parameters. For $T\ge1$, the same bound gives
\begin{equation}\label{eq:multiplicity-tail}
 \#\{N\in\mathcal{S}:N\le X,\ M(N)\ge T\}
 \le\frac{K_qX}{T^q}
       \exp\left(-\gamma L(X)^{1/2}+D_qL(X)^{1/4}\right)
\end{equation}
whenever $X\ge X_q$. Here $q$ is fixed before $X$ tends to infinity.

If edge order is disregarded, the corresponding count is $M(N)/6$. Indeed, two equal positive rational edges would give an irrational face diagonal, so the three edges are distinct and all six permutations represent different ordered classes. Consequently all finiteness and density statements hold for unordered cuboids as well.


\section{Conclusions}

For every positive squarefree $N$, the number of positive rational perfect cuboids up to similarity is finite, regardless of the rank of $E_N$. The proof also gives the uniform estimate $M(N)\le C_0^{1+r_N}$. Its essential geometric step is that no positive-dimensional real abelian coset contained in $S_N$ can pass through its positive region; rational differences then reduce every geometric coset to at most one positive rational point.

Combining this estimate with the established Paulsen--West rank obstruction and Smith's rank statistics proves that the total number of cuboid classes with invariant at most $X$ is $o(X)$. More strongly, every fixed positive moment satisfies \eqref{eq:limit}, with the quantitative decay in \eqref{eq:quantitative}. The cuboid correspondence, the rank obstruction, quantitative Mordell--Lang, and the distributional results are used with their original attribution.

These conclusions leave open whether any perfect cuboid exists, and whether the total number of similarity classes over all $N$ is finite. They establish finiteness of every individual squarefree class and a global weighted zero-density statement. No BSD, GRH, or finiteness assumption on Shafarevich--Tate groups is used.


\end{document}